\newtheorem{theorem}{Theorem}\numberwithin{theorem}{section}
\newtheorem{definition}[theorem]{Definition}
\newtheorem{lemma}[theorem]{Lemma}
\newtheorem{question}[theorem]{Question}
\numberwithin{theoremm}{subsection}
\numberwithin{theoremmm}{subsubsection}
\theoremstyle{remark}
\newcommand{\Rad}{\operatorname{Rad}}
\newcommand{\Aut}{\operatorname{Aut}}
\newcommand{\Alt}{\operatorname{Alt}}
\newcommand{\PSL}{\operatorname{PSL}}
\newcommand{\ord}{\operatorname{ord}}
\newcommand{\Sym}{\operatorname{Sym}}
\newcommand{\Inn}{\operatorname{Inn}}
\newcommand{\id}{\operatorname{id}}
\newcommand{\M}{\operatorname{M}}
\newcommand{\J}{\operatorname{J}}
\newcommand{\PGL}{\operatorname{PGL}}
\newcommand{\GL}{\operatorname{GL}}
\newcommand{\Mod}[1]{\ (\textup{mod}\ #1)}
\newcommand{\B}{\operatorname{B}}
\newcommand{\IN}{\mathbb{N}}
\newcommand{\IF}{\mathbb{F}}
\newcommand{\IZ}{\mathbb{Z}}
\newcommand{\Suz}{\operatorname{Suz}}
\newcommand{\T}{\operatorname{T}}
\newcommand{\Tcal}{\mathcal{T}}
\newcommand{\Kcal}{\mathcal{K}}
\newcommand{\HS}{\operatorname{HS}}
\newcommand{\McL}{\operatorname{McL}}
\newcommand{\He}{\operatorname{He}}
\newcommand{\HN}{\operatorname{HN}}
\newcommand{\Fi}{\operatorname{Fi}}
\newcommand{\ON}{\operatorname{O'N}}
\newcommand{\SL}{\operatorname{SL}}
\begin{document}

\title{Finite groups with an automorphism that is a complete mapping}

\author{Alexander Bors\thanks{Johann Radon Institute for Computational and Applied Mathematics (RICAM), Altenberger Stra{\ss}e 69, 4040 Linz, Austria. \newline E-mail: \href{mailto:alexander.bors@ricam.oeaw.ac.at}{alexander.bors@ricam.oeaw.ac.at} \newline The author is supported by the Austrian Science Fund (FWF), project J4072-N32 \enquote{Affine maps on finite groups}. \newline 2010 \emph{Mathematics Subject Classification}: Primary: 20D45. Secondary: 20D05. \newline \emph{Key words and phrases:} Finite groups, Group automorphisms, Complete mappings.}}

\date{\today}

\maketitle

\abstract{We show that a finite group $G$ admitting an automorphism $\alpha$ such that the function $G\rightarrow G$, $g\mapsto g\alpha(g)$, is bijective is necessarily solvable.}

\section{Introduction}\label{sec1}

Let $G$ be a group. Following Evans \cite[Introduction]{Eva15a}, we call a bijective function $f:G\rightarrow G$
\begin{itemize}
\item a \emph{complete mapping} if and only if the function $G\rightarrow G$, $x\mapsto xf(x)$, is bijective.
\item an \emph{orthomorphism} if and only if the function $G\rightarrow G$, $x\mapsto x^{-1}f(x)$, is bijective.
\item a \emph{strong complete mapping} if and only if $f$ is both a complete mapping and an orthomorphism.
\end{itemize}

The study of complete mappings on groups has a long and rich history. Motivated by the construction of mutually orthogonal Latin squares, complete mappings were introduced by Mann in 1944 \cite{Man44a}. In 1950, Bateman showed that any infinite group admits complete mappings \cite{Bat50a}. For finite groups, the existence problem for complete mappings was picked up by Hall and Paige, who proved in their 1955 paper \cite{HP55a} that a finite group does \emph{not} admit complete mappings if its Sylow $2$-subgroup is cyclic and nontrivial. As we know today, the converse of Hall and Paige's result is true as well, but this was only proved in 2009 by Wilcox \cite{Wil09a}, Evans \cite{Eva09a} and Bray, whose contribution (dealing with the Janko group $\J_4$) only appeared in print later as part of the multi-author paper \cite{BCCSZ19a}.

Variants of the notion of complete mappings have been studied as well. For an overview of what is known about the above defined strong complete mappings, see e.g.~Subsection 2.6 in the survey paper \cite{Eva15a}, or Section 8.5 in the more recent book \cite{Eva18a}. In a field-theoretic context, Winterhof studied so-called \emph{$\Kcal$-complete mappings}, for which the expression assumed to define a bijection on the underlying structure can be more complicated and involve iterates \cite{Win14a}.

From a practical point of view, we mention the usefulness of complete mappings in check digit systems, where they are used to detect twin errors, see e.g.~\cite[Introduction]{SW10a}. In an abelian setting, orthomorphisms serve the analogous purpose of detecting adjacent transpositions, but, as noted by Schulz in \cite[Introduction]{Sch00a}, in a nonabelian setting, one needs to use so-called anti-symmetric mappings (see \cite[Definition 1.1]{Sch00a}) for this purpose instead.

In this paper, we will be concerned with complete mappings on finite groups that are group automorphisms. This also has a connection with another classical research topic in the theory of finite groups, namely fixed-point-free automorphisms (i.e., group automorphisms whose only fixed point is the identity element). Let us explain this in more detail. Consider the following definition:

\begin{definition}\label{kCompleteDef}
Let $G$ be a group, and let $k\in\IZ$. A bijective function $f:G\rightarrow G$ is called \emph{$k$-complete} if and only if the function $G\rightarrow G,x\mapsto x^kf(x)$, is also bijective.
\end{definition}

Then $1$-complete mappings are complete mappings in the above defined sense, and $(-1)$-complete mappings are orthomorphisms. Note that, as is well-known (and see e.g.~\cite[Corollary 8.66]{Eva18a}), an automorphism of a \emph{finite} group is $(-1)$-complete if and only if it is fixed-point-free. Hence all results on fixed-point-free automorphisms of finite groups can be viewed as results on $(-1)$-complete automorphisms. Consider, for example, the following, which is part of a celebrated result of Rowley:

\begin{theorem}\label{rowleyTheo}(Rowley, \cite[Theorem]{Row95a})
A finite group admitting a fixed-point-free (equivalently, a $(-1)$-complete) automorphism is solvable.
\end{theorem}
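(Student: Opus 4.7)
The plan is to argue by contradiction via a minimal counterexample, with CFSG playing an essential role. Suppose $G$ is a finite non-solvable group of minimum order admitting a fixed-point-free automorphism $\alpha$. Exploiting minimality, every proper $\alpha$-invariant subgroup $H<G$ has $\alpha|_H$ still fixed-point-free, and so (to avoid $H$ being a smaller counterexample) $H$ is solvable. A parallel but more delicate analysis for proper $\alpha$-invariant normal subgroups $N$ and the induced automorphisms on $G/N$, using that fixed-point-freeness can be tracked via the injectivity of $g\mapsto g^{-1}\alpha(g)$, reduces the problem to the case that $G$ admits no proper non-trivial $\alpha$-invariant normal subgroup. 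By the standard structure theorem for characteristically simple groups, this forces $G\cong T^n$ for some finite simple group $T$, with $\alpha$ permuting the $n$ direct factors transitively.

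If $T$ were abelian, $G$ would be solvable, contrary to assumption; so $T$ is non-abelian simple. To reduce further to $n=1$, pick any cycle of length $k$ of $\alpha$ on the set of factors, and consider elements of the form $g=(t_1,\ldots,t_k,1,\ldots,1)\in G$ supported on that cycle (with the identity in all coordinates outside the cycle). The equation $\alpha(g)=g$ telescopes: the coordinates $t_2,\ldots,t_k$ are forced by $t_1$ via compositions of the component automorphisms along the cycle, and the remaining consistency condition is $\gamma(t_1)=t_1$, where $\gamma\in\Aut(T)$ is the composition of these component automorphisms. Hence $\alpha$ being fixed-point-free on $G$ forces $\gamma$ to be fixed-point-free on $T$. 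But then $(T,\gamma)$ is a strictly smaller counterexample to the theorem ($T$ is non-abelian simple, hence non-solvable), contradicting minimality. Thus $n=1$, and $G=T$ is a non-abelian finite simple group.

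This leaves the crux: showing that no non-abelian finite simple group admits a fixed-point-free automorphism, which is where CFSG is used in earnest. One proceeds family by family. The alternating groups $\Alt(k)$ are handled essentially by hand, checking small cases and using that their outer automorphism groups are small and act with easily exhibited fixed points. The sporadic simple groups are dispatched by direct inspection of their (known) character tables and automorphism groups. The bulk of the work, and the genuine technical heart of Rowley's argument, is the case of simple groups of Lie type: here one exploits a detailed description of $\Aut(T)$ (inner-diagonal, field, and graph automorphisms, by the Steinberg presentation of $\Aut(T)$) together with Lang--Steinberg-type arguments on the ambient algebraic group, to show that the fixed-point subgroup of any such automorphism acting on $T$ is non-trivial. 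This family-by-family verification for groups of Lie type, and in particular the careful handling of twisted groups and exceptional field/graph automorphisms, is where the difficulty of the theorem is concentrated and constitutes the main obstacle.
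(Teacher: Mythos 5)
The paper does not prove this statement at all: it is Rowley's theorem, imported by citation to \cite[Theorem]{Row95a} and used as a black box (most visibly inside the proof of Lemma \ref{powerLem}). So there is no internal proof to compare against; the question is whether your blind attempt stands on its own.

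Your reduction to the simple case is essentially correct and standard. Passing to quotients does require the observation you make --- that for finite $G$ the injectivity of $g\mapsto g^{-1}\alpha(g)$ gives surjectivity, hence fixed-point-freeness of the induced automorphism on $G/N$ --- and your telescoping argument along a cycle of factors is exactly the mechanism the paper itself uses in Lemma \ref{powerLem} (there for inverted elements rather than fixed points). Two small imprecisions: a group with no nontrivial proper $\langle\alpha\rangle$-invariant normal subgroup need not be characteristically simple, so you should argue via a minimal normal subgroup and its $\alpha$-translates rather than quoting the structure theorem for characteristically simple groups; and transitivity of $\alpha$ on the simple factors is a consequence to be derived (a union of factors closed under $\alpha$ would generate a proper invariant normal subgroup), not an assumption. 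The genuine gap is the last step: the claim that no nonabelian finite simple group admits a fixed-point-free automorphism \emph{is} the theorem --- everything before it is routine --- and you do not prove it; you only name the tools (CFSG, the structure of $\Aut(T)$ for groups of Lie type, Lang--Steinberg) and declare it the main obstacle. As written, the proposal is a correct reduction plus an unproved core claim, so it does not constitute a proof; the honest options are to cite Rowley, as the paper does, or to actually carry out the family-by-family analysis for simple groups.
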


Henceforth, we will refer to Theorem \ref{rowleyTheo} as \enquote{Rowley's theorem}. The main result of this paper is the following analogue of Rowley's theorem for $1$-complete automorphisms:

\begin{theorem}\label{mainTheo}
A finite group admitting a $1$-complete automorphism (i.e., an automorphism that is also a complete mapping) is solvable.
\end{theorem}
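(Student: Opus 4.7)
The plan is to argue by contradiction via a minimal counterexample. Let $G$ be a finite group of minimal order that is non-solvable and admits a $1$-complete automorphism $\alpha$. I would first establish two inheritance properties: if $N\le G$ is any $\alpha$-invariant subgroup, then the self-map $\sigma_\alpha\colon g\mapsto g\alpha(g)$ restricts to $N$ and is injective there (since it is injective on $G$), hence bijective; and if $N$ is also normal in $G$, then $\sigma_\alpha$ descends to a surjective, hence bijective, self-map $gN\mapsto g\alpha(g)N$ on $G/N$. Applied to any proper nontrivial characteristic subgroup $N\triangleleft G$, the minimality of $G$ forces both $N$ and $G/N$ to be solvable and thus $G$ itself, a contradiction. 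Hence $G$ is characteristically simple, and by non-solvability $G\cong T^k$ for some non-abelian finite simple group $T$ and some $k\ge 1$.

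Next, one analyzes the action of $\alpha$ on the $k$ simple direct factors of $G$, which is given by some $\pi\in\Sym_k$. Since the product of the factors in any $\pi$-orbit is an $\alpha$-invariant power of $T$, inheritance plus minimality force $\pi$ to be transitive, i.e., a single $k$-cycle. After relabelling one may write
\[
\alpha(g_1,\ldots,g_k)=(\beta_k(g_k),\beta_1(g_1),\ldots,\beta_{k-1}(g_{k-1}))
\]
for suitable $\beta_1,\ldots,\beta_k\in\Aut(T)$. A short calculation shows that, in any finite group, $\sigma_\alpha$ fails to be injective iff there exists $x\neq 1$ with $\alpha(x)$ conjugate to $x^{-1}$. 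Translating this coordinatewise in $T^k$ and tracking conjugacy classes around the $k$-cycle produces the consistency condition: a nontrivial $T$-conjugacy class $C$ must exist satisfying $\gamma(C)=C$ if $k$ is even and $\gamma(C)=C^{-1}$ if $k$ is odd, where $\gamma:=\beta_k\beta_{k-1}\cdots\beta_1\in\Aut(T)$.

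To finish, one needs the following claim, to be verified using the classification of finite simple groups: for every non-abelian finite simple group $T$ and every $\gamma\in\Aut(T)$, there is a nontrivial conjugacy class $C$ of $T$ with $\gamma(C)=C$, and a (possibly different) nontrivial class $C'$ with $\gamma(C')=(C')^{-1}$. Both requirements follow at once from the stronger assertion that every $\gamma\in\Aut(T)$ stabilizes some nontrivial \emph{real} conjugacy class of $T$ (one with $C=C^{-1}$), typically realized by a distinguished involution class. For the alternating groups one takes $3$-cycles when $n\neq 6$ and double transpositions when $n=6$ (the unique class of size $45$, invariant under the exotic outer automorphism); for the Lie-type families one exhibits an involution class invariant under diagonal, field and graph automorphisms; and the sporadic groups are checked by inspecting the ATLAS. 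The main obstacle is expected to be this final step, in particular the uniform treatment of the Lie-type families — most notably $\POmega_8^+(q)$, whose triality permutes three involution classes — where delicate case analysis is required to locate a real $\gamma$-invariant class under every admissible combination of outer automorphisms.
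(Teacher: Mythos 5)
Your reduction is sound and essentially reproduces the skeleton of the paper's argument, just packaged differently: your injectivity criterion (``$\sigma_\alpha$ fails to be injective iff $\alpha(x)$ is conjugate to $x^{-1}$ for some $x\neq 1$'') is exactly Lemma \ref{inversionLem}; your inheritance properties are Lemma \ref{characteristicLem}; and your cycle-tracking computation on $T^k$, with the resulting parity dichotomy $\gamma(C)=C$ ($k$ even, supplied by Rowley's theorem) versus $\gamma(C)=C^{-1}$ ($k$ odd), is precisely the content of Lemma \ref{powerLem}. The minimal-counterexample framing (characteristic simplicity plus transitivity of the induced permutation of the factors) is a mild repackaging of the paper's route through $\Rad(G)$ and the socle; both are fine.

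The genuine gap is that you have not proved the statement that carries all the weight: that for every nonabelian finite simple group $T$ and every $\gamma\in\Aut(T)$ there is a nontrivial class $C$ with $\gamma(C)=C^{-1}$ (equivalently, Lemma \ref{nfsgLem}). You explicitly defer this to a CFSG case analysis and yourself identify the Lie-type families as ``the main obstacle \ldots where delicate case analysis is required,'' without resolving it. Your proposed stronger claim --- that every automorphism stabilizes a nontrivial real class, realized by a distinguished involution class --- is plausible and handles alternating, sporadic, and several classical families easily (e.g.\ $\PSL_2(q)$ and the Suzuki groups have a unique involution class), but exhibiting such a class uniformly across all Lie types and all admissible combinations of diagonal, field and graph automorphisms (triality on $\POmega_8^+(q)$ being only the most visible difficulty) is real work that you do not carry out. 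The paper sidesteps exactly this by invoking Nikolov's result that every coset representative in $D_S\Phi_S\Gamma_S$ normalizes a subgroup isomorphic to a quasisimple group of type $A_1$, to $\PSL_2(q)\times\PSL_2(q)$, or to a Suzuki group, which (together with Lemmas \ref{characteristicLem} and \ref{powerLem}) reduces the whole Lie-type case to $\PSL_2(q)$ and $\leftidx{^2}\B_2(2^{2m+1})$, where explicit matrix computations finish the job. Without Nikolov's reduction or an equivalent device, your outline does not yet constitute a proof of the Lie-type case, and hence not of the theorem.
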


We make the following four observations concerning Theorem \ref{mainTheo}:
\begin{enumerate}
\item The fact that the existence of a $1$-complete automorphism implies solvability is in stark contrast to the existence of general ($1$-)complete mappings. Indeed, recall that by the converse of Hall and Paige's theorem, any finite group with a nontrivial, noncyclic Sylow $2$-subgroup admits a complete mapping, and so, since the Sylow $2$-subgroups of nonabelian finite simple groups are noncyclic, every finite nonsolvable group admits a complete mapping (just not one that is also a group automorphism).
\item Theorem \ref{mainTheo} is not true in general for infinite groups. For example, the identity automorphism of a Tarski monster group is $1$-complete.
\item The identity automorphism of a finite group $G$ is $1$-complete if and only if $G$ is of odd order. Hence Theorem \ref{mainTheo} can also be viewed as a generalization of the Feit-Thompson theorem \cite{FT63a}.
\item As Rowley's theorem states that for finite groups, the existence of automorphisms that are orthomorphisms implies solvability, and our Theorem \ref{mainTheo} states that the existence of automorphisms that are complete mappings implies solvability, one may also ask whether a finite group admitting an automorphism that is an anti-symmetric mapping in the sense of Schulz \cite[Definition 1.1]{Sch00a} is necessarily solvable. The answer to this is also \enquote{yes}. Indeed, by \cite[Proposition 5.3]{Sch00a}, if $G$ is a finite group, then an automorphism $\alpha$ of $G$ is anti-symmetric if and only if $\alpha$ does not leave any nontrivial conjugacy class of $G$ invariant. But if $G$ is nonsolvable, then by Rowley's theorem, each automorphism $\alpha$ of $G$ has a nontrivial fixed point, say $x$, and so $\alpha$ leaves the nontrivial conjugacy class $x^G$ invariant, whence $\alpha$ is not anti-symmetric.
\end{enumerate}

Finally, we note that for endomorphisms $\varphi$ of general (not necessarily finite) groups $G$, the condition that the function $G\rightarrow G$, $g\mapsto g^{-1}\varphi(g)$ be surjective (such endomorphisms are called \emph{uniform}) has also been studied in several contexts and has some interesting applications. For example, the celebrated Lang-Steinberg theorem from the theory of algebraic groups states that if $G$ is a connected affine algebraic group over an algebraically closed field, then a surjective algebraic group endmorphism of $G$ with finite fixed point subgroup is uniform \cite[\S 10]{Ste68a}. Recently, in \cite{PS18a} and as an application of uniform automorphisms via factorisations of direct products, Praeger and Schneider gave a proof of the non-embeddability of quasiprimitive permutation groups of simple diagonal type into wreath products in product action. Their proof, in contrast to previous ones, works also for infinite groups and does not require the classification of finite simple groups.

Before we proceed with the next section, where we will discuss several lemmas that will facilitate the proof of Theorem \ref{mainTheo}, we introduce some notation used throughout this paper. We denote by $\IN^+$ the set of positive integers. The identity function on a set $X$ is denoted by $\id_X$, and the identity element of a group $G$ by $1_G$. The finite field with $q$ elements is denoted by $\IF_q$, and its multiplicative group of units by $\IF_q^{\ast}$. The automorphism group of a group $G$ will be denoted by $\Aut(G)$, and the inner automorphism group of $G$ by $\Inn(G)$. In this paper, the multiplication in $\Aut(G)$ is the usual function composition $\circ$, so that for $\alpha,\beta\in\Aut(G)$, the product $\alpha\beta$ denotes the automorphism that maps $g\in G$ to $\alpha(\beta(g))$. For a positive integer $m$, the symmetric and alternating groups of degree $m$ will be denoted by $\Sym(m)$ and $\Alt(m)$ respectively.

\section{Preparations for the proof of Theorem \ref{mainTheo}}\label{sec2}

Recall from the paragraph after Definition \ref{kCompleteDef} that an automorphism of a finite group is $(-1)$-complete (in the sense of Definition \ref{kCompleteDef}) if and only if it is fixed-point-free. In a similar vein, Evans gave an equivalent reformulation of $1$-completeness of finite group automorphisms (cf.~also \cite[Corollary 8.67]{Eva18a}):

\begin{lemma}\label{inversionLem}
Let $G$ be a finite group, and $\alpha$ an automorphism of $G$. The following are equivalent:
\begin{enumerate}
\item $\alpha$ is $1$-complete.
\item For all $\beta\in\alpha\Inn(G)\subseteq\Aut(G)$, the only element of $G$ inverted by $\beta$ is $1_G$.
\end{enumerate}
\end{lemma}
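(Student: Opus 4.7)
The plan is to exploit finiteness to translate $1$-completeness into injectivity of the map $\sigma_\alpha:g\mapsto g\alpha(g)$, and then to show that a collision $\sigma_\alpha(g)=\sigma_\alpha(h)$ with $g\ne h$ is exactly the data of a nontrivial element inverted by some member of the coset $\alpha\Inn(G)$.

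First I would observe that $\alpha$ is $1$-complete iff $\sigma_\alpha$ is injective (since $G$ is finite). I would then rearrange the equation $g\alpha(g)=h\alpha(h)$ into a form exhibiting an inner automorphism. Dividing on the left by $h$ and using $\alpha(g)\alpha(h)^{-1}=\alpha(gh^{-1})$ gives $h^{-1}g\cdot\alpha(gh^{-1})=1_G$; setting $s:=gh^{-1}$, this becomes $h^{-1}sh\cdot\alpha(s)=1_G$, i.e.\ $\alpha(s)=h^{-1}s^{-1}h$. Conjugating both sides by $h$ rewrites this as
\[
 h\alpha(s)h^{-1}=s^{-1},
\]
which says precisely that the automorphism $\beta:=\iota_h\circ\alpha$ inverts $s$, where $\iota_h\in\Inn(G)$ denotes conjugation $x\mapsto hxh^{-1}$. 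Since $\Inn(G)\trianglelefteq\Aut(G)$, we have $\beta\in\Inn(G)\alpha=\alpha\Inn(G)$, and $s\ne 1_G$ iff $g\ne h$. This establishes the implication (i)$\Rightarrow$(ii) in the contrapositive.

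For the converse I would just reverse the manipulation: given $\beta=\iota_h\alpha\in\alpha\Inn(G)$ with $\beta(s)=s^{-1}$ for some $s\ne 1_G$, unwind to $\alpha(s)=h^{-1}s^{-1}h$ and define $g:=sh$. A direct check yields
\[
 g\alpha(g)=sh\cdot\alpha(s)\alpha(h)=sh\cdot h^{-1}s^{-1}h\cdot\alpha(h)=h\alpha(h),
\]
while $g\ne h$, so $\sigma_\alpha$ is not injective and $\alpha$ is not $1$-complete. Since every element of $\alpha\Inn(G)$ has the form $\iota_h\alpha$ for some $h\in G$, this covers all cases.

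There is no real obstacle here beyond bookkeeping; the only thing to watch is the convention for $\Inn(G)$-elements and the use of normality of $\Inn(G)$ in $\Aut(G)$ to pass between the left and right cosets $\alpha\Inn(G)$ and $\Inn(G)\alpha$. The argument is thus a short, symmetric algebraic identity packaged as an equivalence between collisions of $\sigma_\alpha$ and nontrivial inverted elements for members of the coset $\alpha\Inn(G)$.
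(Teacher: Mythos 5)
Your argument is correct and complete: the identity $g\alpha(g)=h\alpha(h)\iff (\iota_h\alpha)(gh^{-1})=(gh^{-1})^{-1}$ is exactly the right bookkeeping, and the two computations together give both implications. Note that the paper does not prove this lemma at all --- it simply cites \cite[Theorem 4.1(ii)]{Eva92a} --- so your self-contained verification is, if anything, more informative than what appears in the text, and it is essentially the standard argument one finds in Evans. One small labelling slip: your first computation (collision $\Rightarrow$ nontrivial inverted element) is the contrapositive of (ii)$\Rightarrow$(i), not of (i)$\Rightarrow$(ii); since your second computation supplies the other direction, the equivalence is nonetheless fully established, but you should swap the labels.
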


\begin{proof}
See \cite[Theorem 4.1(ii)]{Eva92a}.
\end{proof}

The following lemma is useful for structural arguments concerning finite groups that admit $k$-complete automorphisms:

\begin{lemma}\label{characteristicLem}
Let $G$ be a finite group, $N$ a characteristic subgroup of $G$, $k\in\IZ$, and $\alpha$ a $k$-complete automorphism of $G$. Then the following hold:
\begin{enumerate}
\item The automorphism $\tilde{\alpha}$ of $G/N$ induced by $\alpha$ is $k$-complete.
\item The restriction $\alpha_{\mid N}\in\Aut(N)$ is $k$-complete.
\end{enumerate}
\end{lemma}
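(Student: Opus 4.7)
The plan is to observe that both parts rest on the single fact that because $N$ is characteristic, $\alpha$ restricts to an automorphism of $N$ and descends to an automorphism $\tilde{\alpha}$ of $G/N$. Denote by $\mu:G\rightarrow G$ the map $g\mapsto g^k\alpha(g)$, which by assumption is a bijection.

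For part (2), I would note that for $n\in N$, the element $n^k\alpha(n)$ lies in $N$, because $n^k\in N$ and $\alpha(n)\in\alpha(N)=N$. Hence $\mu$ restricts to a function $N\rightarrow N$, which coincides with the corresponding map $n\mapsto n^k\alpha_{\mid N}(n)$ for $\alpha_{\mid N}\in\Aut(N)$. Since $\mu$ is injective on $G$, its restriction to $N$ is injective, and since $N$ is finite, this restriction is bijective. Thus $\alpha_{\mid N}$ is $k$-complete.

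For part (1), I would first check that the map $\tilde{\mu}:G/N\rightarrow G/N$ sending $gN$ to $(gN)^k\tilde{\alpha}(gN)=g^k\alpha(g)N$ is well-defined: if $gN=g'N$, then $g^k N=(g')^k N$ (raising to the $k$-th power is well-defined in the quotient by $N$), and $\alpha(g)N=\alpha(g')N$ because $\alpha(N)=N$. Equivalently, $\tilde{\mu}$ is the composition of $\mu$ with the canonical projection, which factors through $G/N$ for the same reason. Surjectivity of $\tilde{\mu}$ is then immediate from the surjectivity of $\mu$: given any coset $hN$, pick $g\in G$ with $\mu(g)=h$, so $\tilde{\mu}(gN)=hN$. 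Finiteness of $G/N$ then upgrades surjectivity to bijectivity, showing $\tilde{\alpha}$ is $k$-complete.

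There is no real obstacle here; the argument is a standard restriction/quotient manipulation, and the only mildly nontrivial point — well-definedness of both $\alpha_{\mid N}$ on $N$ and $\tilde{\alpha}$ on $G/N$ — is handled uniformly by the characteristic property $\alpha(N)=N$. It is worth remarking that the argument does not use anything special about $k$; the same proof would work for any fixed exponent or, more generally, for any word evaluated on $(g,\alpha(g))$ whose value lies in $N$ whenever $g\in N$ and is well-defined modulo $N$ whenever the input coset is fixed.
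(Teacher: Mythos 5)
Your proof is correct and matches the paper's argument exactly: part (2) uses injectivity of $g\mapsto g^k\alpha(g)$ restricted to the characteristic subgroup $N$, part (1) uses surjectivity of the induced map on $G/N$, and finiteness upgrades each to a bijection. The extra well-definedness checks you include are fine and only make explicit what the paper leaves implicit.
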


\begin{proof}
For statement (1): Since $\alpha$ is $k$-complete, the function $G\rightarrow G$, $g\mapsto g^k\alpha(g)$, is surjective. Hence its \enquote{coarsification modulo $N$}, the function
\[
G/N\rightarrow G/N, gN\mapsto g^k\alpha(g)N=g^kN\cdot\alpha(g)N=(gN)^k\cdot\tilde{\alpha}(gN),
\]
is also surjective and thus (by the finiteness of $G/N$) bijective. But this just means by definition that $\tilde{\alpha}$ is $k$-complete, as we wanted to show.

For statement (2): Since $\alpha$ is $k$-complete, the function $G\rightarrow G$, $g\mapsto g^k\alpha(g)$, is injective. Hence its restriction to $N$, the function
\[
N\rightarrow N,n\mapsto n^k\alpha(n)=n^k\cdot(\alpha_{\mid N})(n),
\]
is also injective and thus (by the finiteness of $N$) bijective. But this just means by definition that $\alpha_{\mid N}$ is $k$-complete, as we wanted to show.
\end{proof}

Recall that our ultimate goal is to show that a finite nonsolvable group $G$ does not admit any $1$-complete automorphisms. An important special case is when $G$ is a nonabelian finite simple group, which will be dealt with in Lemma \ref{nfsgLem} below. But before that, we show the following, which will be useful for passing from nonabelian finite simple groups to finite nonsolvable groups in general (and it will also be used in the proof of Lemma \ref{nfsgLem}):

\begin{lemma}\label{powerLem}
Let $S$ be a nonabelian finite simple group, $n\in\IN^+$, and assume that $S$ does not admit any $1$-complete automorphisms. Then $S^n$ does not admit any $1$-complete automorphisms.
\end{lemma}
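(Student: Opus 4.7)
The plan is to contradict the $1$-completeness of a hypothetical automorphism $\alpha$ of $S^n$ by exploiting the wreath-product decomposition $\Aut(S^n)=\Aut(S)\wr\Sym(n)$. Write $\alpha=(\alpha_1,\ldots,\alpha_n)\cdot\sigma$ with $\alpha_i\in\Aut(S)$ and $\sigma\in\Sym(n)$, so that $\alpha(g)_i=\alpha_i(g_{\sigma^{-1}(i)})$ for $g=(g_1,\ldots,g_n)\in S^n$.

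The first step is a reduction to the case where $\sigma$ is a single cycle. For any $\sigma$-cycle $C\subseteq\{1,\ldots,n\}$, the subgroup $S^C:=\{g\in S^n:g_j=1_S\text{ for all }j\notin C\}$ is $\alpha$-invariant, and the map $\phi_\alpha(g):=g\alpha(g)$ restricts to a self-map of $S^C$. Since $\phi_\alpha$ is injective on $S^n$, its restriction is injective and hence (by finiteness) bijective on $S^C$, so $\alpha_{\mid S^C}$ is a $1$-complete automorphism of $S^{|C|}$. Thus one may assume that $\sigma$ is an $n$-cycle; the case $n=1$ is immediate from the hypothesis on $S$.

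Assume now $n\geq 2$ and, after relabeling, that $\sigma$ acts as $i\mapsto i+1\pmod n$. The heart of the argument is to analyze the $\phi_\alpha$-preimage of $1_{S^n}$: the equations $g_i\alpha_i(g_{i-1})=1_S$ (indices mod $n$) determine $g_1,\ldots,g_{n-1}$ from $g_n$ via $g_i=\alpha_i(g_{i-1})^{-1}$, and unrolling the recursion yields the closing equation
\[
g_n=\gamma(g_n)^{(-1)^n},\qquad\gamma:=\alpha_n\alpha_{n-1}\cdots\alpha_1\in\Aut(S).
\]
If $n$ is even, $1$-completeness of $\alpha$ forces this equation to admit only the trivial solution, so $\gamma$ is a fixed-point-free automorphism of the nonsolvable group $S$, contradicting Rowley's theorem (Theorem \ref{rowleyTheo}).

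If $n$ is odd, the same argument only forces $\gamma$ to invert no nontrivial element of $S$. The plan for amplifying this is to exploit the freedom to replace $\alpha$ by any element of $\alpha\Inn(S^n)$ without destroying $1$-completeness (Lemma \ref{inversionLem}). A direct computation shows that modifying each $\alpha_i$ to $\iota_{c_i}\alpha_i$ (with $\iota_c$ denoting conjugation by $c\in S$) is such an inner perturbation and replaces $\gamma$ by $\iota_D\gamma$ for $D=c_n\cdot\alpha_n(c_{n-1})\cdot\alpha_n\alpha_{n-1}(c_{n-2})\cdots\alpha_n\cdots\alpha_2(c_1)$; specializing $c_1=\cdots=c_{n-1}=1_S$ shows that $D$ can be any prescribed element of $S$. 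Hence no element of $\gamma\Inn(S)$ inverts a nontrivial element, and Lemma \ref{inversionLem} then promotes $\gamma$ itself to a $1$-complete automorphism of $S$, contradicting the hypothesis. The main obstacle is precisely this odd-$n$ case, where one must check carefully that the inner perturbations of $\alpha$ sweep out the entire coset $\gamma\Inn(S)$ at the level of $\gamma$.
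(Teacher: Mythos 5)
Your proposal is correct and follows essentially the same route as the paper: the wreath-product decomposition of $\Aut(S^n)$, reduction to a single cycle of $\sigma$, the recursion along the cycle leading to the closing equation $g_n=\gamma(g_n)^{(-1)^n}$, and the parity split using Rowley's theorem for even length and the hypothesis on $S$ (via Lemma \ref{inversionLem}, sweeping out the coset $\gamma\Inn(S)$ by inner perturbations) for odd length. The only difference is presentational — you argue contrapositively from a hypothetical $1$-complete $\alpha$ and make the coset-sweeping computation for $D$ explicit, whereas the paper verifies the inversion criterion coset by coset — but the mathematical content is identical.
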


\begin{proof}
Choose a transversal $\Tcal$ for $\Inn(S)$ in $\Aut(S)$. Then since $\Aut(S^n)$ is the permutational wreath product $\Aut(S)\wr\Sym(n)$ (see e.g.~\cite[result 3.3.20, p.~90]{Rob96a}), an arbitrary coset of $\Inn(S^n)\cong\Inn(S)^n$ in $\Aut(S^n)$ looks like this:
\[
(\alpha_1,\ldots,\alpha_n)\sigma\Inn(S)^n=\Inn(S)^n(\alpha_1,\ldots,\alpha_n)\sigma,
\]
where $\alpha_1,\ldots,\alpha_n\in\Tcal$ and $\sigma\in\Sym(n)$ is a coordinate permutation on $S^n$. Let $(\beta_1,\ldots,\beta_n)\sigma$, where $\beta_i=\iota_i\alpha_i$ for some inner automorphism $\iota_i\in\Inn(S)$ and $i=1,\ldots,n$, be an arbitrary element of that coset. Our goal is to show that we can choose $\vec{\iota}=(\iota_1,\ldots,\iota_n)\in\Inn(S)^n$ such that $(\beta_1,\ldots,\beta_n)\sigma$ inverts some nontrivial element $(s_1,\ldots,s_n)\in S^n$, i.e., such that there is $\vec{s}=(s_1,\ldots,s_n)\in S^n\setminus\{(1_S,\ldots,1_S)\}$ with
\begin{equation}\label{eq2}
(\beta_1(s_{\sigma^{-1}(1)}),\beta_2(s_{\sigma^{-1}(2)}),\ldots,\beta_n(s_{\sigma^{-1}(n)})) = (s_1^{-1},s_2^{-1},\ldots,s_n^{-1}).
\end{equation}
To that end, we study the solution set in $S^n$ to Equation (\ref{eq2}), which we can equivalently rewrite into the following system of equations over $S$:
\begin{equation}\label{eq3}
\beta_i(s_{\sigma^{-1}(i)})=s_i^{-1}\text{ for }i=1,\ldots,n.
\end{equation}
Assume without loss of generality (relabeling indices if necessary) that $(1,2,\ldots,k)$ is a cycle of $\sigma$ for some $k\in\IN^+$. Then the equations in the system from Formula (\ref{eq3}) with $i>k$ only involve variables $s_j$ with $j>k$, and so those equations will be satisfied as long as we set $s_j:=1_S$ for $j>k$. We can thus focus on the first $k$ equations from Formula (\ref{eq3}), which look like this:
\begin{align*}
\beta_1(s_k) &=s_1^{-1}, \\
\beta_2(s_1) &=s_2^{-1}, \\
\beta_3(s_2) &=s_3^{-1}, \\
&\vdots \\
\beta_{k-1}(s_{k-2}) &=s_{k-1}^{-1}, \\
\beta_k(s_{k-1}) &=s_k^{-1},
\end{align*}
or equivalently
\begin{align*}
s_1 &=\beta_1(s_k)^{-1}, \\
s_2 &=\beta_2(s_1)^{-1}=(\beta_2\beta_1)(s_k), \\
s_3 &=\beta_3(s_2)^{-1}=(\beta_3\beta_2\beta_1)(s_k)^{-1}, \\
&\vdots \\
s_{k-1} &=\beta_{k-1}(s_{k-2})^{-1}=(\beta_{k-1}\beta_{k-2}\cdots \beta_1)(s_k)^{(-1)^{k-1}}, \\
s_k &=\beta_k(s_{k-1})^{-1}=(\beta_k\beta_{k-1}\cdots\beta_1)(s_k)^{(-1)^k}.
\end{align*}
We make a case distinction.
\begin{enumerate}
\item Case: $k$ is even. Then let $\vec{\iota}:=(\id_S,\ldots,\id_S)\in\Inn(S)^n$, so that $\beta_k\beta_{k-1}\cdots\beta_1=\alpha_k\alpha_{k-1}\cdots\alpha_1=:\gamma$. By Rowley's theorem \cite[Theorem]{Row95a}, we can choose an element $s_k\in S\setminus\{1_S\}$ with $\gamma(s_k)=s_k$, and by the above, if we set
\[
s_i:=(\alpha_i\alpha_{i-1}\cdots\alpha_1)(s_k)^{(-1)^i}
\]
for $i=1,2,\ldots,k-1$, then $(s_1,s_2,\ldots,s_k,1_S,\ldots,1_S)\in S^n$ is a nontrivial element of $S^n$ inverted by $(\alpha_1,\ldots,\alpha_n)\sigma=(\beta_1,\ldots,\beta_n)\sigma$, as required.
\item Case: $k$ is odd. Note: As $\vec{\iota}$ runs through all of $\Inn(S)^n$, the product
\[
\beta_k\beta_{k-1}\cdots\beta_1=(\iota_k\alpha_k)(\iota_{k-1}\alpha_{k-1})\cdots(\iota_1\alpha_1)
\]
runs through the entire coset $\alpha_k\alpha_{k-1}\cdots\alpha_1\Inn(S)$. It follows by assumption and Lemma \ref{inversionLem} that we can choose $\vec{\iota}\in\Inn(S)^n$ such that $\beta_k\beta_{k-1}\cdots\beta_1$ inverts some nontrivial element of $S$, say $s_k$. Then, by the above, if we set
\[
s_i:=(\beta_i\beta_{i-1}\cdots\beta_1)(s_k)^{(-1)^i}
\]
for $i=1,2,\ldots,k-1$, then $(s_1,s_2,\ldots,s_k,1_S,\ldots,1_S)\in S^n$ is a nontrivial element of $S^n$ inverted by $(\beta_1,\ldots,\beta_n)\sigma$, as required.
\end{enumerate}
\end{proof}

We are now ready to prove the following:

\begin{lemma}\label{nfsgLem}
Let $S$ be a nonabelian finite simple group. Then $S$ does not admit any $1$-complete automorphisms.
\end{lemma}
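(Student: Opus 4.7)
The plan is to use Lemma \ref{inversionLem} to reduce the problem to a question about conjugacy classes of $S$, settle the inner-automorphism case by elementary means, and invoke the classification of finite simple groups (CFSG) for the outer case.

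By Lemma \ref{inversionLem}, the statement is equivalent to: for every $\alpha\in\Aut(S)$, some $\beta=\conj_g\circ\alpha\in\alpha\Inn(S)$ inverts a nontrivial element of $S$. The condition $\beta(s)=s^{-1}$ rewrites as $\alpha(s)$ being $S$-conjugate to $s^{-1}$, so the task becomes: for each $\alpha\in\Aut(S)$, exhibit a nontrivial element $s\in S$ with $\alpha(s)$ in the $S$-class of $s^{-1}$, equivalently a nontrivial conjugacy class $C$ of $S$ satisfying $\alpha(C)=C^{-1}$. When $\alpha$ is inner, $\alpha(s)$ is $S$-conjugate to $s$, so one only needs $s$ to be $S$-conjugate to $s^{-1}$; since $|S|$ is even by the Feit-Thompson theorem, $S$ contains involutions, which are self-inverse, and the inner case is done.

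For the remaining, nontrivial cosets of $\Inn(S)$, my strategy is to find, for each $\alpha\in\Aut(S)$, an $\alpha$-fixed conjugacy class of involutions in $S$, since any involution $s$ in such a class satisfies $\alpha(s)$ being $S$-conjugate to $s=s^{-1}$. Equivalently, via Brauer's permutation lemma applied to the bijection $g\mapsto\alpha(g)^{-1}$ of $S$, one may instead seek a nontrivial real irreducible character of $S$ that is fixed by $\alpha$. I would then verify the existence of such a class (or character) family by family via CFSG. For alternating groups $\Alt(n)$ with $n\geq 5$, the class of double transpositions is $\Aut(\Alt(n))$-invariant: for $n\neq 6$ the $\Sym(n)$-centralizer of $(1\,2)(3\,4)$ contains odd permutations (so the $\Sym(n)$- and $\Alt(n)$-classes coincide), while $\Alt(6)$ has a single class of involutions. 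For the $26$ sporadic simple groups, $|\Out(S)|\leq 2$, and the ATLAS exhibits in each case an $\Aut(S)$-invariant class of involutions.

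The main obstacle is the family of simple groups of Lie type, where the inner-diagonal, field, and graph automorphisms can permute involution classes in intricate ways, particularly for classical groups of small rank and in characteristic $2$. Here I would exploit the structure theory of maximal tori, root subgroups, and Levi subgroups: in odd characteristic, the class of long-root involutions (whose centralizer is a subsystem subgroup stable under all diagonal, field, and graph automorphisms) furnishes an $\Aut(S)$-invariant involution class, while in characteristic $2$ a careful bookkeeping of unipotent classes and their fusion under outer automorphisms is needed, and the smallest-rank cases typically have to be handled individually.
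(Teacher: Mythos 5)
Your overall architecture matches the paper's: reduce via Lemma \ref{inversionLem} to showing that every coset $\alpha\Inn(S)$ contains an automorphism inverting a nontrivial element, observe that this amounts to finding a nontrivial class $C$ with $\alpha(C)=C^{-1}$, settle the inner coset with an involution, and then run through CFSG. Your treatment of the alternating groups (double transpositions, with $\Alt(6)$ handled by its unique involution class) and of the sporadic groups (an ATLAS check, feasible since $|\Out(S)|\le 2$) is sound and at the same level of rigor as the paper, which instead locates an element of order $2^k$, $k\ge 2$, in each outer coset and lets it centralize its own involutory power --- the same mechanism, since an automorphism fixing an involution inverts it.

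The genuine gap is the Lie type case, which is the bulk of the work and which you only sketch. Two problems. First, the sketch as written is misdirected: long root elements are $p$-elements, so they are involutions precisely in characteristic $2$, not in odd characteristic as you state; in odd characteristic the involutions are semisimple, and pinning down an $\Aut(S)$-invariant (or even just $\alpha$-invariant) involution class uniformly across all types, ranks, and outer automorphisms (including triality on $D_4(q)$ and the exceptional graph automorphisms of $B_2$, $G_2$, $F_4$ in the relevant characteristics) is a substantial classification effort that you have not carried out, and it is not obvious that an involution class invariant under a given outer automorphism always exists without case-by-case verification. Second, you are missing the idea that makes this case tractable in the paper: a result of Nikolov (\cite[proof of Proposition 7]{Nik16a}) produces, for each coset representative $\beta\in D_S\Phi_S\Gamma_S$, a $\beta$-invariant subgroup $S_1\le S$ isomorphic to $\SL_2(q)/Z$, to $\PSL_2(q)\times\PSL_2(q)$, or to $\leftidx{^2}\B_2(q)$. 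Combined with Lemmas \ref{characteristicLem} and \ref{powerLem}, this reduces the entire Lie type case to exhibiting, in each coset of $\Inn$ of $\PSL_2(q)$ and of the Suzuki groups, an automorphism centralizing an involution --- a short, explicit matrix computation. Without such a reduction (or an equally concrete substitute, e.g.\ the known fusion data for involution classes under outer automorphisms), your Lie type argument remains a program rather than a proof.
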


\begin{proof}
We use the classification of finite simple groups and go through several cases.
\begin{enumerate}
\item Case: $S$ is sporadic. Note: Since every nonabelian finite simple group contains an element of order $2$, the coset representative $\id_S$ of $\Inn(S)$ itself inverts some nontrivial element of $S$. Hence the assertion is clear if $S$ is a complete group, and we only need to consider the unique nontrivial coset of $\Inn(S)$ in the groups $\M_{12}$, $\M_{22}$, $\HS$, $\J_2$, $\McL$, $\Suz$, $\He$, $\HN$, $\Fi_{22}$, $\Fi_{24}'$, $\ON$, $\J_3$ and $\T$. Using the ATLAS of Finite Group Representations \cite{ATLAS}, one sees that for each of these groups $S$, the nontrivial coset $\Aut(S)\setminus\Inn(S)$ contains an element $\beta$ of order $2^k$ for some integer $k\geq2$ (actually, one may choose $k=2$ except for $S=\ON$, where one may choose $k=3$), so that $\beta$ centralizes and thus inverts the order $2$ element $\beta^{2^{k-1}}\in S$.
\item Case: $S=\Alt(m)$ for some $m\geq5$. First, assume that $m=6$. As explained in the argument for the sporadic groups case, we only need to worry about the three nontrivial cosets of $\Inn(S)$ in $\Aut(S)$, each of which contains an order $4$ automorphism $\beta$ by the ATLAS of Finite Group Representations \cite{ATLAS}, and $\beta$ centralizes and thus inverts the order $2$ element $\beta^2\in S$. So assume now that $m\not=6$. Then there is only one nontrivial coset of $\Inn(S)\cong\Alt(m)$ in $\Aut(S)\cong\Sym(m)$, which contains the order $4$ automorphism corresponding to the $4$-cycle $(1,2,3,4)$.
\item Case: $S$ is of Lie type. By Lemma \ref{inversionLem}, it suffices to show that each coset $\beta\Inn(S)$ of $\Inn(S)$ in $\Aut(S)$ contains an automorphism which inverts some nontrivial element of $S$. Assume without loss of generality that the coset representative $\beta$ is chosen in the product set $D_S\Phi_S\Gamma_S$, where $D_S$ and $\Phi_S$ denote groups of diagonal and field automorphisms of $S$ respectively, and $\Gamma_S$ denotes a set of graph automorphisms of $S$ (see \cite[Section 2.5]{GLS98a} for information on automorphisms of finite simple groups of Lie type). Then by \cite[proof of Proposition 7, pp.~868f.]{Nik16a}, there is a subgroup $S_1\leq S$ left invariant by $\beta$ and such that $S_1$ is isomorphic to one of the following:
\begin{itemize}
\item a quasisimple group of type $A_1$, i.e., a quotient of the form $\SL_2(q)/Z$, where $q\geq4$ is a prime power and $Z$ is a subgroup of the center $\zeta\SL_2(q)$;
\item the direct product $\PSL_2(q)\times\PSL_2(q)$ for some prime power $q\geq4$; or
\item the exceptional Suzuki group $\leftidx{^2}\B_2(q)$ with $q=2^{2m+1}$ for some $m\in\IN^+$.
\end{itemize}
Note that we are done if we can show that $\beta\Inn(S_1)$ contains an automorphism inverting some nontrivial element of $S_1$. Since $\beta$ normalizes $S_1$, it thus suffices to show that none of the groups listed above to one of which $S_1$ is isomorphic admit any $1$-complete automorphisms. By Lemmas \ref{characteristicLem} and \ref{powerLem}, this, in turn, is equivalent to the assertion that none of the groups $\PSL_2(q)$, where $q\geq4$ is a prime power, or $\leftidx{^2}B_2(2^{2m+1})$, where $m\in\IN^+$, admits a $1$-complete automorphism (note that in the quasisimple case, $Z$ is characteristic in $\SL_2(q)$, since it is a (characteristic) subgroup of the cyclic group $\zeta\SL_2(q)$, which is characteristic in $\SL_2(q)$). It is this last assertion that we will now show (using Lemma \ref{inversionLem} again) to conclude our proof of Lemma \ref{nfsgLem}.

As for the Suzuki groups $S=\leftidx{^2}B_2(2^{2m+1})$, note that any coset of $\Inn(S)$ in $\Aut(S)$ contains a field automorphism $\varphi\in\Phi_S$, which centralizes a copy of $\leftidx{^2}B_2(2)$, the Frobenius group of order $20$. In particular, $\varphi$ centralizes and thus inverts some order $2$ element of $S$, as required.

We now turn to the projective special linear groups $S=\PSL_2(q)$, with $q=p^f\geq4$ a prime power. In what follows, for $M\in\GL_2(q)$, we denote by $\overline{M}$ the image of $M$ under the canonical projection $\GL_2(q)\rightarrow\PGL_2(q)$. We make a subcase distinction:
\begin{enumerate}
\item Subcase: $p=2$. Then each coset of $\Inn(S)$ in $\Aut(S)$ contains a field automorphism $\varphi\in\Phi_S$, which centralizes a copy of $\PSL_2(2)\cong\Sym(3)$ in $S$, and so $\varphi$ centralizes and thus inverts some order $2$ element of $S$, as required.
\item Subcase: $p>2$. The cosets of $\Inn(S)$ in $\Aut(S)$ which contain a field automorphism can be handled as in the previous subcase, \enquote{$p=2$}. So, assume that we are considering a coset of the form
\[
\overline{\begin{pmatrix}\xi & 0 \\ 0 & 1\end{pmatrix}}\phi^i\Inn(S),
\]
where $\xi$ is a generator of $\IF_q^{\ast}$ and $\phi$ is the standard Frobenius automorphism
\[
\overline{\begin{pmatrix}a & b \\ c & d\end{pmatrix}} \mapsto \overline{\begin{pmatrix}a^p & b^p \\ c^p & d^p\end{pmatrix}}
\]
of $S$, and where $i\in\{0,1,\ldots,f-1\}$. We make a subsubcase distinction:
\begin{enumerate}
\item Subsubcase: $q\equiv 1\Mod{4}$. Using that the subgroups of $\IZ/f\IZ$ generated by $i+f\IZ$ and $\gcd(f,i)+f\IZ$ respectively are equal, we see that the coset representative
\[
\overline{\begin{pmatrix}\xi & 0 \\ 0 & 1\end{pmatrix}}\phi^i
\]
has the power
\begin{align*}
&\left(\overline{\begin{pmatrix}\xi & 0 \\ 0 & 1\end{pmatrix}}\phi^i\right)^{\frac{f}{\gcd(f,i)}\cdot\frac{p^{\gcd(f,i)}-1}{2}} \\
&= \left(\overline{\begin{pmatrix}\xi^{1+p^i+p^{2i}+\cdots+p^{\left(\frac{f}{\gcd(f,i)}-1\right)i}} & 0 \\ 0 & 1\end{pmatrix}}\right)^{\frac{p^{\gcd(f,i)}-1}{2}} \\
&= \left(\overline{\begin{pmatrix}\xi^{1+p^{\gcd(f,i)}+p^{2\gcd(f,i)}+\cdots+p^{(\frac{f}{\gcd(f,i)}-1)\gcd(f,i)}} & 0 \\ 0 & 1\end{pmatrix}}\right)^{\frac{p^{\gcd(f,i)}-1}{2}} \\
&= \overline{\begin{pmatrix}\xi^{\frac{p^f-1}{p^{\gcd(f,i)}-1}} & 0 \\ 0 & 1\end{pmatrix}}^{\frac{p^{\gcd(f,i)}-1}{2}} = \overline{\begin{pmatrix}\xi^{\frac{p^f-1}{2}} & 0 \\ 0 & 1\end{pmatrix}}=\overline{\begin{pmatrix}-1 & 0 \\ 0 & 1\end{pmatrix}},
\end{align*}
which is an order $2$ element of $S$ that is centralized and thus inverted by the coset representative
\[
\overline{\begin{pmatrix}\xi & 0 \\ 0 & 1\end{pmatrix}}\phi^i,
\]
as required.
\item Subsubcase: $q\equiv 3\Mod{4}$. Then $-1$ is a non-square in $\IF_q$, and so
\[
\overline{\begin{pmatrix}0 & 1 \\ 1 & 0\end{pmatrix}}\in\PGL_2(q)\setminus\PSL_2(q),
\]
whence we may choose the coset representative of the form
\[
\overline{\begin{pmatrix}0 & 1 \\ 1 & 0\end{pmatrix}}\phi^i
\]
instead. But this representative maps the order $2$ element
\[
s:=\overline{\begin{pmatrix}0 & 1 \\ -1 & 0\end{pmatrix}}\in\PSL_2(q)=S
\]
to
\[
\overline{\begin{pmatrix}0 & 1 \\ 1 & 0\end{pmatrix}}\cdot\overline{\begin{pmatrix}0 & 1 \\ -1 & 0\end{pmatrix}}\cdot\overline{\begin{pmatrix}0 & 1 \\ 1 & 0\end{pmatrix}}=\overline{\begin{pmatrix}0 & -1 \\ 1 & 0\end{pmatrix}}=\overline{\begin{pmatrix}0 & 1 \\ -1 & 0\end{pmatrix}}=s,
\]
so that $s$ is inverted by the coset representative
\[
\overline{\begin{pmatrix}0 & 1 \\ 1 & 0\end{pmatrix}}\phi^i,
\]
as required.
\end{enumerate}
\end{enumerate}
\end{enumerate}
\end{proof}

\section{Proof of Theorem \ref{mainTheo}}\label{sec3}

We will show that if $G$ is a finite nonsolvable group, then $G$ does not admit any $1$-complete automorphisms. Assume otherwise. Denote by $\Rad(G)$ the \emph{solvable radical of $G$}, the largest solvable normal subgroup of $G$. Since the class of finite solvable groups is closed under extensions, the quotient $G/\Rad(G)$ has no nontrivial solvable normal subgroups, and hence, by \cite[3.3.18, p.~89]{Rob96a}, the socle (i.e., the subgroup generated by all the minimal nontrivial normal subgroups) $T$ of $G/\Rad(G)$ is a direct product of nonabelian finite simple groups. In particular, $T$ has a characteristic subgroup of the form $S^n$ for some nonabelian finite simple group $S$ and some $n\in\IN^+$, and that subgroup is also characteristic in $G/\Rad(G)$. It follows by Lemma \ref{characteristicLem} that $S^n$ admits a $1$-complete automorphism. Hence, by Lemma \ref{powerLem}, $S$ admits a $1$-complete automorphism, which contradicts Lemma \ref{nfsgLem}.

\section{Concluding remarks}\label{sec4}

We conclude this paper with some related open questions for further research. Observe that if $G$ is a finite group and $k\in\IZ$ is such that $\gcd(k+1,|G|)=1$, then the identity automorphism of $G$ is $k$-complete. In particular, every finite group admits a $(-2)$-complete automorphism, and each of the Suzuki groups $\leftidx{^2}B_2(2^{2m+1})$ admits a $2$-complete automorphism. Therefore, the next larger positive value of $k$ for which it makes sense to ask whether an analogue of Theorem \ref{mainTheo} holds is $3$:

\begin{question}\label{ques1}
Are finite groups admitting a $3$-complete automorphism necessarily solvable?
\end{question}

Similarly, one can show that for all even integers $k$ with $|k|\leq 12$, there is a finite nonsolvable group $G$ admitting a $k$-complete automorphism, which raises the following question:

\begin{question}\label{ques2}
Is it true that for every even integer $k$, there is a finite nonsolvable group $G$ admitting a $k$-complete automorphism?
\end{question}

With our definition of a $k$-complete automorphism $\alpha$ of a group $G$, we are viewing the function $G\rightarrow G$, $g\mapsto g\alpha(g)$, as a special case of the family of functions $G\rightarrow G$, $g\mapsto g^k\alpha(g)$. A different possibility to generalize it is by considering the functions $G\rightarrow G$, $g\mapsto g\alpha(g)\alpha^2(g)\cdots\alpha^k(g)$ for $k\in\IN^+$. This is more in the spirit of Winterhof's $\Kcal$-complete mappings from \cite{Win14a} already mentioned in Section \ref{sec1}. In this context, the following question, asking about generalizations of Theorem \ref{mainTheo} in a different direction, is of interest:

\begin{question}\label{ques3}
For which positive integers $k$ is it the case that for all finite groups $G$, the existence of an automorphism $\alpha$ of $G$ such that the function $G\rightarrow G$, $g\mapsto g\alpha(g)\alpha^2(g)\cdots\alpha^k(g)$, is bijective implies that $G$ is solvable?
\end{question}

Our Theorem \ref{mainTheo} says that the solvability implication in Question \ref{ques3} holds for $k=1$, whereas, for example, it does not hold for $k=2$ (by letting $G=\leftidx{^2}B_2(2^{2m+1})$ and $\alpha=\id_G$). We note that if $k=\ord(\alpha)-1$, where $\ord(\alpha)$ denotes the order of the automorphism $\alpha$, then the contrary condition that $g\alpha(g)\alpha^2(g)\cdots\alpha^k(g)=1_G$ for all $g\in G$ means by definition that $\alpha$ is a so-called \emph{splitting automorphism of $G$}. The terminology \enquote{splitting automorphism} was introduced by Gorchakov in \cite{Gor65a}, and splitting automorphisms have been studied by various authors. For example, Kegel proved that a finite group with a splitting automorphism of prime order is nilpotent \cite[Satz 1]{Keg61a}, and Ersoy showed that a finite group is solvable if it admits a splitting automorphism of odd order \cite[Theorem 1.4]{Ers16a}.

Finally, we would like to note that there is a quantitative version of Rowley's theorem, proved by Hartley \cite[Theorem A$'$]{Har92a}, which states that for any fixed positive integer $n$, as the order of a nonabelian finite simple group $S$ tends to $\infty$, so does the minimum number of fixed points of an order $n$ automorphism of $S$. Note that the assumption of keeping the automorphism order fixed is essential, since, for example, the standard Frobenius field automorphism
\[
\overline{\begin{pmatrix}a & b \\ c & d\end{pmatrix}}\mapsto\overline{\begin{pmatrix}a^2 & b^2 \\ c^2 & d^2\end{pmatrix}}
\]
of $\PSL_2(2^f)$ has $|\PSL_2(2)|=6$ fixed points for all $f\in\IN^+$. Since the function $G\rightarrow G$, $g\mapsto g^{-1}\alpha(g)$, where $\alpha$ is an automorphism of the group $G$, is constant on the right cosets of the fixed point subgroup of $\alpha$ in $G$, an equivalent reformulation of Hartley's theorem is that for nonabelian finite simple groups $S$, as $|S|\to\infty$, one has that
\[
\frac{1}{|S|}\max_{\alpha\in\Aut(S),\ord(\alpha)=n}{|\{s^{-1}\alpha(s)\mid s\in S\}|}\to0.
\]
This raises the following analogous question, the answer to which is \enquote{no} by known results:

\begin{question}\label{ques4}
Is it true that for each given positive integer $n$, as the order of a nonabelian finite simple group $S$ tends to $\infty$, one has that
\[
\frac{1}{|S|}\max_{\alpha\in\Aut(S),\ord(\alpha)=n}{|\{s\alpha(s)\mid s\in S\}|}\to 0?
\]
\end{question}

Indeed, it follows from \cite[Theorem 1.1(b,c)]{BPS09a} that the proportion of elements of odd order in a finite simple group of Lie type of bounded rank is bounded away from $0$, so that the answer to Question \ref{ques4} is \enquote{no} already for $n=1$.

\section{Acknowledgements}\label{sec5}

The author would like to thank Arne Winterhof for suggesting to work on complete mappings on groups. Moreover, he would like to express his gratitude towards Peter Cameron, Michael Giudici, Laszlo Merai, Cheryl Praeger, Csaba Schneider and Arne Winterhof for some helpful comments during the work on this paper.

\end{document}